\theoremstyle{plain}
\newtheorem{corollary}{Corollary}
\newtheorem{theorem}[corollary]{Theorem}
\newtheorem{lemma}[corollary]{Lemma}
\newtheorem{proposition}[corollary]{Proposition}
\newtheorem*{theorem*}{Theorem}
\newtheorem*{lemma*}{Lemma}
\newtheorem*{definition*}{Definition}
\newtheorem*{corollary*}{Corollary}
\theoremstyle{definition}
\theoremstyle{remark}
\newtheorem*{remark}{Remark}
\newtheorem{algorithm}{Algorithm}
\newcommand{\R}{\mathbb{R}}
\newcommand{\proba}{\mathbb{P}}
\newcommand{\N}{\mathbb{N}}
\newcommand{\E}{\mathbb{E}}
\newcommand{\C}{\mathcal{C}}
\newcommand{\T}{\mathcal{T}}
\renewcommand{\L}{\mathcal{L}}
\DeclareMathOperator*{\diam}{diam}
\newcommand*{\height}{\mathfrak h}
\newcommand{\set}[1]{\llbracket 1,#1\rrbracket}
\begin{document}
\title{Diameter and long paths in critical digraph}
\author{Arthur Blanc-Renaudie\thanks{Tel Aviv University, Israel, ablancrenaudiepro@gmail.com}}
\date{\today}
\maketitle
\begin{abstract} We study the random directed graph $\vec G(n,p)$ in which each of the $n(n-1)$ possible directed edges are present with probability $p$. We show that in the critical window the longest self avoiding oriented paths in $\vec G(n,p)$ have length $O_{\proba}(n^{1/3})$ so $\vec G(n,p)$ has diameter $O_{\proba}(n^{1/3})$.
\end{abstract}
\section{Introduction}
Since its introduction more than fifty years ago \cite{ER} the interest for Erd\"os--R\'enyi random graphs has kept growing. This model and its study has also been generalized in many ways due to many critics. An important one is that many real-life networks are oriented. However, random digraphs, which are the natural oriented version of Erd\'os--R\'enyi random graphs, are much harder to study due to their lack of symmetry, and thus less understood. 

For $n\in \N,p\in[0,1]$, the random digraph $\vec G(n,p)$ is a random directed graph on $\{1,2,\dots,n\}$ in which each of the $n(n-1)$ possible directed edges are independently  present with probability $p$.

In this paper, we are interested in the phase transition for random digraphs. The first result on this phase transition was proved by Karp \cite{KarpOriented} and Luczak \cite{LuczakOriented}. Those results was then refined by Luczak and Seierstad \cite{LuczakSeierstad} who showed that the size of strongly connected components in random digraphs share the same critical window as Erd\"os--R\'enyi random graphs. Later Goldshmidt and Stephenson \cite{ChristinaOriented} proved that renormalized by $n^{1/3}$ the strongly connected components converge. 

Here we consider the whole digraph by studying its diameter and long paths. More precisely we say that $(v_0,v_1,\dots, v_k)$ is a saop (for Self-Avoiding Oriented Path) of length $k$ in an oriented graph $\vec G$  if the oriented edges $(v_0,v_1),\dots (v_{k-1},v_k)$ are different and in $\vec G$. For every $i,j\in \vec G$, let $d_\rightarrow(i,j)$ be the minimal length of a saop from $i$ to $j$ (or by default $-\infty$ if there is not such paths). The diameter of $\vec G$ is $\diam(\vec G):=\max_{i,j\in \vec G} d_\rightarrow(i,j)$.
 
For a sequence $(X_n)_{n\in \N}$ of random variables and $f:n\mapsto \R^{+*}$, we say that $X_n=O_{\proba}(f(n))$ if $(X_n/f(n))_{n\in \N}$ is tight, and that $X_n=o_{\proba}(f(n))$ if $(X_n/f(n))_{n\in \N}$ converges in probability to $0$. 

We will assume in all the paper that we are in the critical window, that is $p=1/n+O(1/n^{4/3})$.
The goal of this paper is to show the next results:  (see also Theorem \ref{thm3} for more precise bounds)
 \begin{theorem} \label{Thm1} As $n\to \infty$, $ \diam(\vec G(n,p))=O_{\proba}(n^{1/3}) $.
\end{theorem}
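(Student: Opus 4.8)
The plan is to prove the stronger statement that the longest saop of $\vec G(n,p)$ has length $O_{\proba}(n^{1/3})$; this dominates $\diam(\vec G(n,p))$ since every shortest oriented path is a saop, and the diameter alone is even easier, because a shortest oriented path from $i$ to $j$ has length equal to the layer at which $j$ appears in the breadth-first exploration of the out-component $C_i^{\text{out}}$ of $i$ (the set of vertices reachable from $i$), so the diameter is just $\max_i(\text{height of the exploration from }i)$. First I would set up, for each vertex $i$, this breadth-first exploration: from $i$, reveal out-edges layer by layer, discovering $Z_0=1,Z_1,Z_2,\dots$ vertices, where conditionally on the first $k$ layers $Z_{k+1}\sim\mathrm{Bin}\big(n-\sum_{j\le k}Z_j,\,1-(1-p)^{Z_k}\big)$. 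As long as $o(n)$ vertices have been seen this is a critical Galton--Watson process with essentially Poisson offspring of mean $np=1+o(1)$, and in fact the law of $(Z_k)_k$ is exactly that of the corresponding exploration of a component of the Erd\H{o}s--R\'enyi graph $G(n,p)$ — only the edges that are not revealed differ. From this I would extract three inputs: (i) $\proba(|C_i^{\text{out}}|\ge s)\lesssim s^{-1/2}$ uniformly in $n$ for $s\le\Lambda n^{2/3}$, and $\proba(\max_i|C_i^{\text{out}}|\ge\Lambda n^{2/3})\to 0$ as $\Lambda\to\infty$; (ii) conditionally on $|C_i^{\text{out}}|=s$, the exploration height $h_i$ has the exponential tail $\proba(h_i\ge t\sqrt s\mid |C_i^{\text{out}}|=s)\lesssim e^{-ct}$, exactly as for a critical Galton--Watson tree conditioned on its size; (iii) the strongly connected components of $\vec G(n,p)$ have size $O_{\proba}(n^{1/3})$ \cite{LuczakSeierstad,ChristinaOriented}, so any out-component contains only short directed cycles.

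Next I would combine (i)--(ii) into $\max_i h_i=O_{\proba}(n^{1/3})$, which already yields the diameter part of Theorem \ref{Thm1}. For a single $i$, $\proba(h_i\ge An^{1/3})\lesssim\sum_{s\le\Lambda n^{2/3}}\proba(|C_i^{\text{out}}|=s)\,e^{-cAn^{1/3}/\sqrt s}+\proba(|C_i^{\text{out}}|>\Lambda n^{2/3})\lesssim\sum_{s\le\Lambda n^{2/3}}s^{-3/2}e^{-cAn^{1/3}/\sqrt s}+o_\Lambda(1)$. A crude union bound over the $n$ choices of $i$ loses a factor $\log n$ here; to get the sharp power I would instead union-bound only over a sparse family of ``extremal'' out-components — for instance the maximal ones, those rooted at in-degree-$0$ vertices, or the source classes of the condensation — of which only $O_{\proba}(1)$ reach size of order $n^{2/3}$, while for out-components of size $s\ll n^{2/3}$ the exponential factor already beats the $n\,s^{-3/2}$ count; the tightness of $\max_i|C_i^{\text{out}}|$ at scale $n^{2/3}$ from (i) then closes the argument.

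Then I would upgrade from shortest paths to arbitrary saops, i.e. oriented trails. A trail from $i$ lives in $C_i^{\text{out}}$ but may reuse vertices, and every edge of $C_i^{\text{out}}$ that is not a forward edge of the exploration from $i$ can be traversed at most once, while between two consecutive uses of such ``extra'' edges the trail can only descend the exploration tree. So the longest saop from $i$ is controlled by $h_i$, by the heights of the subtrees of the exploration tree (themselves of order $n^{1/6}$), and by how the extra edges can be chained. The genuinely new feature relative to the undirected case is that a \emph{large} out-component of $\vec G(n,p)$ carries order $n^{1/3}$ extra edges — not $O_{\proba}(1)$ — so ``trail $\approx$ path up to a constant'' is simply false here; instead I would bound the longest alternating trail directly, showing via a first-moment estimate that the auxiliary digraph on the extra edges (with an arc from one extra edge to another when the head of the first reaches the tail of the second down the exploration tree) is too sparse to produce, together with the subtree heights, any trail of length $\gg n^{1/3}$.

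The hard part will be this interface between the last two steps: squeezing the sharp $n^{1/3}$, rather than $n^{1/3}\log n$, out of the union bound — which is what forces one to isolate and count the correct sparse family of extremal out-components and to use the $n^{2/3}$-tightness of their sizes — and, on top of that, controlling the trail structure inside the large out-components, whose surplus of order $n^{1/3}$ is exactly what makes the directed problem strictly harder than its Erd\H{o}s--R\'enyi analogue.
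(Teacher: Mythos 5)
Your reduction of the diameter to $\max_i h_i$ and the per-vertex inputs (i)--(ii) are fine, but the step that turns per-vertex tail bounds into a bound on the maximum is exactly where the proposal breaks, and the fix you propose does not work. In the critical window the per-vertex probability $\proba(h_i\ge A n^{1/3})$ is of order $n^{-1/3}$ (up to a factor decaying in $A$ but not in $n$): the dominant contribution comes from $|C_i^{\text{out}}|\asymp n^{2/3}$, an event of probability $\asymp n^{-1/3}$, on which the conditional probability of height $\ge An^{1/3}$ is bounded below. So a union bound over the $n$ vertices loses a factor of order $n^{2/3}$, not $\log n$, and no choice of $A$ fixed with $n$ rescues it. Your proposed sparse family of ``extremal'' out-components does not repair this: the eccentricity of a vertex is not controlled by the eccentricities of its in-neighbours or of the source classes of the condensation (a source may have short shortcuts to the vertices that are far from $i$), the claim that only $O_{\proba}(1)$ such components reach size $n^{2/3}$ is unsupported (in expectation order $n^{2/3}$ vertices have out-components of size $\asymp n^{2/3}$), and tightness of $\max_i|C_i^{\text{out}}|$ at scale $n^{2/3}$ does not follow from the per-vertex bound (i), since out-components overlap, unlike Erd\H{o}s--R\'enyi components which partition the vertex set. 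The paper's way around this correlation problem is the key trick you are missing: it only proves the weak per-vertex bound $\proba(\L^{\uparrow}(v)>\delta n^{1/3})\le Cn^{-1/3}$ (Proposition \ref{Main}) and then, instead of a union bound, counts \emph{inner edges} of a hypothetical long saop and applies the BK inequality, so that a saop of length $Ln^{1/3}$ forces $\asymp Ln^{1/3}$ disjoint occurrences, each of probability $\le pC^2n^{-2/3}$; Markov's inequality then gives Theorem \ref{Thm2} and hence Theorem \ref{Thm1} (and, iterated, the sub-gamma bound of Theorem \ref{thm3}).

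The second gap is that the upgrade from shortest paths to saops, which you correctly identify as the genuinely directed difficulty (a size-$n^{2/3}$ out-component carries $\asymp n^{1/3}$ surplus right-edges, so ``trail $\approx$ path'' fails), is left as a one-sentence plan (``a first-moment estimate on the auxiliary digraph of extra edges''). That estimate is the actual content of the paper: it is carried out after reducing, via the Goldschmidt--Stephenson DFS decomposition (Algorithm 2), to a uniform rooted tree decorated with right-edges that are independent of the tree, and then running a chaining argument (Lemmas \ref{Principle2}, \ref{Single} and \ref{prelim}) in which paths are decomposed according to a hierarchy of deleted branches, each excursion between right-edges being bounded by the height of the remaining forest. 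A naive first-moment count over chains of surplus edges in the BFS exploration does not obviously close, both because the lengths of the tree segments between consecutive surplus edges are strongly dependent and of order $n^{1/3}$ themselves, and because the surplus edges you meet are revealed by the exploration rather than independent of it; some version of the paper's conditioning structure (tree independent of right-edges, plus branch-covering estimates as in Lemma \ref{prelim}) seems needed. As it stands, the proposal establishes neither the saop bound nor, because of the union-bound issue above, the diameter bound it claims to get ``already'' from steps (i)--(ii).
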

\begin{theorem} \label{Thm2} As $n\to \infty$, the longest saop in $\vec G(n,p)$ have length $O_{\proba}(n^{1/3})$.
\end{theorem}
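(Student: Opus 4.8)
The plan is to deduce Theorem~\ref{Thm1} from Theorem~\ref{Thm2} --- since $d_\rightarrow(i,j)$ is the length of a \emph{shortest} saop and so is bounded by the length of the longest saop, one has $\diam(\vec G(n,p))\le$ (length of the longest saop) --- and then to prove Theorem~\ref{Thm2} via a forward breadth-first exploration. First I would fix a vertex $v$ and explore its out-component $C^+(v)$ (the set of vertices reachable from $v$ by a directed path) in BFS order, exposing at each step the out-neighbours, among the not-yet-seen vertices, of the vertex being processed. As long as $o(n)$ vertices have been seen each processed vertex gets $\mathrm{Bin}(n-o(n),p)$ children, i.e. a $\mathrm{Poisson}(1+O(n^{-1/3}))$ to leading order, so the exploration is dominated by --- and comparable with --- a near-critical Galton--Watson process; depletion then forces extinction after $O_{\proba}(n^{2/3})$ vertices, just as for components of critical Erd\"os--R\'enyi graphs. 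This produces the BFS tree $T_v$ of $C^+(v)$ together with the ``surplus'' edges inside $C^+(v)$, of which there are $s(v):=|E(\vec G[C^+(v)])|-(|C^+(v)|-1)$.

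The first step I would carry out reduces the statement to one about heights. Every edge of $T_v$ points from parent to child, so any directed walk in $C^+(v)$ using only tree edges is strictly descending in $T_v$ and has length at most $\mathrm{ht}(T_v)$. Cutting an arbitrary saop contained in $C^+(v)$ at the surplus edges it uses --- each at most once, since a saop has pairwise distinct edges --- writes it as a concatenation of at most $s(v)+1$ descending tree-walks joined by single edges, so its length is at most $(s(v)+1)(\mathrm{ht}(T_v)+1)$. As the longest saop of $\vec G(n,p)$ is $\max_v$ (length of the longest saop in $C^+(v)$), it is enough to prove $\max_v\,(s(v)+1)(\mathrm{ht}(T_v)+1)=O_{\proba}(n^{1/3})$.

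Next I would estimate, for a single out-component, both ingredients through the exploration walk. Conditionally on $|C^+(v)|=m$, the tree $T_v$ is, up to the depletion correction, a critical Galton--Watson tree conditioned to size $m$, for which $\proba(\mathrm{ht}(T_v)\ge h)\le C\exp(-ch^2/m)$; and $s(v)$ is a sum over the $m$ processing steps of $\mathrm{Bin}(a_t,p)$ terms whose active-set sizes satisfy $\sum_t a_t=O(m^{3/2})$, hence has mean $O(m^{3/2}/n)$ and a Poisson-type upper tail. When $m=\Theta(n^{2/3})$ these give a longest-saop bound of order $n^{1/3}$ for that component; the structure theory of the critical digraph (Luczak--Seierstad, Goldschmidt--Stephenson), which mirrors the component structure of critical Erd\"os--R\'enyi graphs, should yield that there are only $O_{\proba}(1)$ such large out-components, so these are dealt with one by one. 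For the remaining vertices, whose out-component has size $m\ll n^{2/3}$, I would sum $\proba(\mathrm{ht}(T_v)\ge xn^{1/3})\le C\exp(-cx^2 n^{2/3}/m)$ against the expected number $\lesssim n\,m^{-1/2}$ of vertices with out-component of size $\ge m$; since $m\ll n^{2/3}$ the super-Gaussian decay in $x^2 n^{2/3}/m$ overcomes the counting factor, closing the union bound (with the large/small threshold placed just below $n^{2/3}$). No separate ``backward'' argument is needed, a saop being confined to the out-component of its start vertex.

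The hard part will be exactly this uniformity over start vertices. A naive union bound over all $n$ of them is fatally lossy, because the $\Theta(n^{2/3})$ vertices that can reach a large strongly connected component all carry large, heavily overlapping out-components, each of which has a non-vanishing probability of having an anomalously deep BFS tree, so the expected number of ``bad'' vertices does not vanish; the large out-components must genuinely be split off and handled with scaling-limit-type control. Combined with this is the delicate matching of the two size-ranges at the threshold $n^{2/3}$ and, for the longest-saop (as opposed to diameter) statement, controlling the interplay between an atypically large surplus $s(v)$ and the height $\mathrm{ht}(T_v)$ so that their product stays $O_{\proba}(n^{1/3})$ rather than $n^{1/3}$ times a slowly growing factor --- this is where sharp tail bounds, and presumably the more precise estimates of Theorem~\ref{thm3}, are needed. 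Finally, it is worth stressing that the directedness is essential: the underlying undirected graph of $\vec G(n,p)$ is the \emph{supercritical} Erd\"os--R\'enyi graph $G(n,\approx 2/n)$, with a linear-sized giant component of logarithmic diameter, so the $O_{\proba}(n^{1/3})$ bound cannot be read off from undirected results and really exploits that each vertex has only $\approx\mathrm{Poisson}(1)$ out-neighbours, which makes the forward exploration critical.
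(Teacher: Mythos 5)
Your deterministic reduction (a saop in the out-component $C^+(v)$ splits at the non-tree edges it uses into descending stretches of the BFS tree, so its length is at most $(s(v)+1)(\mathrm{ht}(T_v)+1)$) is fine, but the estimate of $s(v)$ is wrong, and this is fatal. In the \emph{directed} graph, the forward BFS from $v$ only examines edges from the vertex being processed to not-yet-seen vertices; an edge $(u,w)$ with $w$ explored \emph{before} $u$ is never examined, so conditionally on the exploration it is present independently with probability $p$, and it lies inside $\vec G[C^+(v)]$. There are of order $m^2$ such ordered pairs in a component of size $m$, so $s(v)$ has mean of order $m^2p$, not $m^{3/2}p$: your ``active-set'' count $\sum_t a_t=O(m^{3/2})$ is the undirected-surplus intuition (where edges to fully explored vertices are impossible), and it does not transfer to digraphs. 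For $m\asymp n^{2/3}$ this gives $s(v)\asymp n^{1/3}$ and $\mathrm{ht}(T_v)\asymp n^{1/3}$, so your product bound only yields $O(n^{2/3})$. This miscounted family of ``backward-in-exploration-order'' edges is exactly what the paper isolates as the right-edges $\leadsto(\vec G)$ in its DFS decomposition (the genuine undirected surplus being $O_{\proba}(1)$ and negligible), and the whole content of Proposition~\ref{Main2} is to beat the naive product bound: via Lemma~\ref{Principle2}, Lemma~\ref{Single} (paths using $k$ right-edges cost $(p\,\height(S)\,m)^k$) and a chaining over scales with covering numbers $N(\T_m,\delta\sqrt m)$, one shows that the tree stretches between consecutive right-edges are typically far shorter than the full height, giving $O(\sqrt m)$ for the longest saop in the tree-plus-right-edges graph. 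Without an argument of this kind your scheme cannot reach $n^{1/3}$.

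The second gap is the one you flag yourself and leave open: uniformity over the $\Theta(n^{2/3})$ start vertices with large, overlapping out-components. The paper does not attack this by a union bound over vertices or by splitting off large components at a threshold near $n^{2/3}$; it sidesteps it entirely. From the single-vertex estimate $\proba(\L^{\uparrow}(v)>\delta n^{1/3})=\proba(\L^{\downarrow}(v)>\delta n^{1/3})\leq Cn^{-1/3}$ (Proposition~\ref{Main}, proved through the tree reduction above), it calls an edge $(u,v)$ an \emph{inner edge} if, disjointly, the edge is present, a saop of length $\geq n^{1/3}$ ends at $u$, and one starts at $v$; the BK inequality bounds this probability by $pC^2n^{-2/3}$, Markov's inequality gives $O_{\proba}(n^{1/3})$ inner edges, and any saop of length $Ln^{1/3}$ deterministically contains at least $(L-2)n^{1/3}-3$ of them. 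This converts a weak per-vertex bound into the global statement with no need for the scaling-limit structure of large components, which is precisely where your sketch would stall.
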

One of our main motivations for the above Theorems is to show scaling limits for $\vec G(n,p)$, as proved for Erd\"os--R\'enyi random graphs by Addario--Berry, Broutin, and Goldshmidt in \cite{ABG}. However, one would first need to develop a topological theory for oriented metric spaces, and generalize the Gromov-Hausdorff--Prokhorov topology for those spaces. Solving this problem would thus require a whole new set of ideas. 

\paragraph{Plan of the paper} In Section \ref{sec:Explain}, we prove Theorems \ref{Thm1} and \ref{Thm2} assuming an analog result for uniform rooted tree. This result is then proved in Section  \ref{Sec:Chaining} using the chaining method and branch tightness. 

\paragraph{Acknowledgment} This project started in a 2022 Barbados Workshop, where I had the occasion to meet many nice people and discuss the scaling limit problem with them. Thanks! I would also like to apology  for a previous wrong proof that the longest saop would have length of order $\log(n) n^{1/3}$. I am supported by the ERC consolidator grant 101001124 (UniversalMap).
\section{Reduction to a tree problem} \label{sec:Explain}
\subsection{The long path probability, and why it implies Theorems \ref{Thm1} and \ref{Thm2}} \label{2.1}
We show here that the next technical proposition implies Theorem \ref{Thm1} and \ref{Thm2}. In the rest of the section, we will explain why it comes from an analog result for uniform rooted trees. For every $n,m\in \N$, let $\llbracket n,m\rrbracket:=[n,m]\cap \N$. For  every $v\in \set n$, we write $\L^{\uparrow}(v)$ (resp. $\L^{\downarrow}(v)$) for the length of the longest saop in $\vec G(n,p)$ starting from (resp. ending at) $v$. 
\begin{proposition} \label{Main} For every $\delta>0$, there exists $C\in \R^+$, such that for every $n\in \N$, $v\in \set n$,
\[ \proba(\L^{\uparrow}(v)> \delta n^{1/3})=\proba(\L^{\downarrow}(v)> \delta n^{1/3}) \leq C n^{-1/3}. \]
\end{proposition}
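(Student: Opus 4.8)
The plan is to reduce the estimate on $\vec G(n,p)$ to an analogous estimate for a uniform rooted tree via an exploration process, following the BFS/DFS paradigm used for Erd\H{o}s--R\'enyi graphs. First I would fix a vertex $v$ and run the natural forward exploration from $v$: reveal the out-neighbours of $v$, then the out-neighbours of those, and so on, uncovering the ``forward'' structure of $\vec G(n,p)$ reachable from $v$. Because each of the $n(n-1)$ possible directed edges is present independently with probability $p=1/n+O(n^{-4/3})$, the set of vertices reachable from $v$, together with the tree of first-discovery edges, is closely coupled with a Galton--Watson tree with $\mathrm{Binomial}(n-1,p)$ (essentially $\mathrm{Poisson}(1+O(n^{-1/3}))$) offspring, which in turn is within total variation $o(1)$ of a uniform rooted labelled tree on a Poisson-sized vertex set (or can be compared directly via a size-biasing / conditioning argument to the uniform rooted tree $\T_m$ on $m$ vertices with $m$ of order the component size). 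The key observation is that any saop in $\vec G(n,p)$ starting at $v$ projects, after contracting repeated visits, to a path in this exploration forest whose length is at most the height of the exploration tree plus a correction coming from ``back'' and ``cross'' edges that create short-cuts rather than long paths; more precisely, a saop of length $\ell$ from $v$ forces the forward-reachable tree from $v$ to have height $\gtrsim \ell$ minus a negligible error, since each distinct oriented edge used either is a tree edge of the exploration or is one of the (stochastically few) non-tree edges, and the non-tree edges cannot be traversed often without repeating an edge.

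Granting that, the heart of the matter becomes: the longest path (equivalently the height) in a uniform rooted tree on $m$ vertices is $O_{\proba}(\sqrt m)$, and with the right tail, namely $\proba(\height(\T_m) > t\sqrt m) \le C/t$ or better (this is the ``analog result for uniform rooted trees'' that the paper defers to Section~\ref{Sec:Chaining}, proved there by chaining and branch tightness). Since in the critical window the component of $v$ has size $O_{\proba}(n^{2/3})$ — indeed $\proba(|\connex(v)| > s n^{2/3}) \le C/\sqrt{s}$ by the standard critical branching-process estimate — one has $\sqrt{|\connex(v)|} = O_{\proba}(n^{1/3})$, and combining the two tail bounds by conditioning on the component size gives
\[
\proba(\L^{\uparrow}(v) > \delta n^{1/3}) \le \sum_{m} \proba(|\connex(v)| = m)\,\proba\big(\height(\T_m) \gtrsim \delta n^{1/3}\big) + (\text{coupling error}) \le C n^{-1/3},
\]
where the extra $n^{-1/3}$ gain (beyond mere tightness) comes from the fact that to even have a path of length $\delta n^{1/3}$ the component must already be atypically large, of size $\gtrsim \delta^2 n^{2/3}$, an event of probability $O(\delta^{-1} n^{-1/3})$; one must be a little careful to track that the tree-height tail on that event is itself $O(1)$ rather than eating the gain. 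The statement for $\L^{\downarrow}(v)$ is identical by the symmetry $\vec G(n,p) \mapsto \vec G(n,p)^{\mathrm{rev}}$ which reverses every edge and preserves the law, so $\L^{\downarrow}(v)$ has the same distribution as $\L^{\uparrow}(v)$; this gives the claimed equality of the two probabilities for free.

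The main obstacle I expect is controlling the non-tree (``surplus'') edges in the oriented exploration: unlike in the undirected case, a directed saop can leave the first-discovery tree, wander along revealed-but-unused edges, and re-enter, so one must argue that the number of such excursions is tight (again $O_{\proba}(1)$ surplus in the critical window, as in \cite{LuczakSeierstad,ChristinaOriented}) and, crucially, that each excursion contributes only $O_{\proba}(n^{1/3})$ to the path length rather than allowing the path to be much longer than the tree height — essentially because each non-tree edge can be used at most once and the pieces of the path between non-tree edges are again genuine tree paths whose lengths sum to at most (number of excursions)$\times$(tree height). Making this rigorous requires a clean bound on the number of edges spanned by the forward-reachable set of $v$ and a union bound over the (tight number of) ways to splice tree-paths together, and this is where the quantitative $n^{-1/3}$ — as opposed to a merely qualitative tightness statement — has to be extracted carefully from the component-size tail.
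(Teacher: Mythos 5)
Your reduction to a rooted-tree problem and the edge-reversal symmetry giving $\proba(\L^{\uparrow}(v)>\delta n^{1/3})=\proba(\L^{\downarrow}(v)>\delta n^{1/3})$ are in the spirit of the paper, which implements the reduction via the DFS decomposition of Goldschmidt--Stephenson: conditionally on the component $\C(1)$ of the underlying undirected graph being a tree of size $m$, the directed structure is exactly a uniform rooted tree oriented towards the root plus an independent copy of $\vec G(m,p)$ of right-edges, and the factor $n^{-1/3}$ comes from $\proba(\C(1)\text{ is not a tree})=O(n^{-1/3})$ together with $\E[\#\C(1)]=O(n^{1/3})$. Up to this point your accounting (component-size tail plus a conditional tree estimate) could be made to work.

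However there is a genuine gap at the step where you dismiss the non-tree edges. Inside a component of size $m\asymp n^{2/3}$, the number of extra directed edges available to a saop is of order $pm^{2}\asymp n^{1/3}$, not $O_{\proba}(1)$: the tight surplus in \cite{LuczakSeierstad,ChristinaOriented} refers to the undirected surplus, respectively to the strongly connected components, not to the forward (right) edges, which occur independently with probability $p$ among the $\asymp n^{4/3}$ ordered pairs of vertices of the component. Consequently your bound of the form (number of excursions) $\times$ (tree height) gives only $O(n^{1/3})\times O(n^{1/3})=O(n^{2/3})$, which is far from the claim; determining how many of these $\asymp n^{1/3}$ shortcut edges a single self-avoiding oriented path can actually traverse is precisely the hard part of the problem. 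In the paper this is the content of Proposition \ref{Main2}, proved via Lemma \ref{Single} (the conditional probability that a path started at $i$ and staying in $S$ uses at least $k$ right-edges is at most $(p\,\height(S)\,m)^k$) combined with a chaining argument over nested sets whose residual heights decrease geometrically (Lemmas \ref{Principle2} and \ref{prelim}), yielding the quantitative bound $\proba(\L(\vec \T_{m,p})>x\sqrt m)\leq C(1/x^2+p^{2/3}m)$ that is then summable against $\proba(\#\C(1)=m)$. Without that argument, or a substitute of comparable strength (note the author's own acknowledgement of an earlier erroneous $\log(n)\,n^{1/3}$ bound, a sign that this step is delicate), the proposal does not establish Proposition \ref{Main}.
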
 
Given Proposition \ref{Main}, the BK inequality implies Theorem \ref{Thm2}, and so Theorem \ref{Thm1}:
\begin{proof}[Proof of Theorem \ref{Thm2}] We say that $(u,v)$ is an inner edge if disjointly: $(u,v)\in \vec G(n,p)$, there is a saop of length at least $n^{1/3}$ ending at $u$, and there is a saop of length at least $n^{1/3}$ starting from $v$. By Proposition \ref{Main}, and by the BK inequality, for some constant $C$, for every $u,v\in \set n$,  
\[ \proba((u,v)\text{ is an inner edge})\leq pC^2 n^{-2/3}. \]
Thus, by Markov's inequality the number of inner edges is $O_{\proba}(n^{1/3})$. Theorem \ref{Thm2} follows as in a saop of length $Ln^{1/3}$ there is at least $(L-2)n^{1/3}-3$ inner edges. 
\end{proof}
Refining the above proof, the BK inequality also directly provides sub-gamma upper-bound for $\L(\vec G(n,p))$ the length of the longest saop in $\vec G(n,p)$, and so for $\diam( \vec G(n,p))\leq \L(\vec G(n,p))$.
\begin{theorem} \label{thm3} There exists $c,C>0$ such that for every $n\in \N$, for every $x>C$,
\[ \proba(\diam (\vec G(n,p))>x n^{1/3})\leq \proba(\L (\vec G(n,p))>x n^{1/3}) \leq e^{-cx\log x}.\]
\end{theorem}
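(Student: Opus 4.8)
The plan is to refine the counting-of-inner-edges argument used for Theorem \ref{Thm2}, but now track the deviation probability quantitatively rather than merely concluding tightness. Recall that any saop of length $Ln^{1/3}$ in $\vec G(n,p)$ contains at least $(L-2)n^{1/3}-3$ inner edges, so $\{\L(\vec G(n,p))>xn^{1/3}\}$ is contained in the event that the number $N$ of inner edges is at least $(x-2)n^{1/3}-3$. Thus it suffices to bound $\proba(N\geq k)$ for $k$ of order $xn^{1/3}$. The natural route is a union bound over ordered $k$-tuples of potential inner edges together with the BK inequality: for a fixed tuple of $k$ \emph{distinct} directed edges $e_1,\dots,e_k$, the event that all of them are simultaneously inner edges is, by BK applied repeatedly, dominated by the product of the individual probabilities, each of which is at most $pC^2n^{-2/3}\leq 2C^2 n^{-5/3}$ by Proposition \ref{Main} and the BK inequality (as in the proof of Theorem \ref{Thm2}). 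Since there are at most $(n^2)^k=n^{2k}$ such tuples, we get
\[ \proba(N\geq k)\leq \frac{1}{k!}\,n^{2k}\,(2C^2 n^{-5/3})^k = \frac{(2C^2 n^{1/3})^k}{k!}, \]
where the $1/k!$ comes from the fact that $N\geq k$ forces the existence of an unordered set of $k$ inner edges, which we may then order in $k!$ ways (or, more simply, bound $\binom{N}{k}$-type counts).

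Next I would insert $k=\lceil (x-2)n^{1/3}\rceil$ and estimate the resulting Poisson-type tail. Writing $\lambda:=2C^2 n^{1/3}$, the bound $\lambda^k/k!$ with $k\approx (x-2)n^{1/3}$, i.e.\ $k/\lambda \approx (x-2)/(2C^2)$, behaves by Stirling like $(e\lambda/k)^k = \exp\!\big(-k\log(k/(e\lambda))\big)$. Since $k/\lambda$ is proportional to $x$ and $k$ is proportional to $xn^{1/3}\geq x$, this is at most $\exp(-c' x\log x)$ once $x$ exceeds some absolute constant $C$ (large enough that $\log(k/(e\lambda))\gtrsim \log x$ and that the integer-rounding and the $-3$ additive losses are harmless). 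Adjusting constants, this gives exactly $\proba(\L(\vec G(n,p))>xn^{1/3})\leq e^{-cx\log x}$ for all $x>C$, and the diameter bound follows from $\diam(\vec G(n,p))\leq \L(\vec G(n,p))$, which holds because any shortest saop realizing the diameter is in particular a saop.

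A couple of technical points need care. First, BK for the directed Erd\H{o}s--R\'enyi model: the event "$(u,v)$ is an inner edge" is defined as the disjoint occurrence of three increasing events (the edge is present; a long saop ends at $u$ avoiding certain edges; a long saop starts at $v$), and simultaneous occurrence of several inner-edge events for distinct edges $e_1,\dots,e_k$ must be expressed as a disjoint occurrence of $3k$ increasing events on the product space of edge indicators, so that the van den Berg--Kesten inequality applies and yields the product bound; one should check that the witness edge-sets for distinct inner edges can indeed be taken disjoint, which is precisely the definition of "inner edge" being a disjoint-occurrence event and the reason the union-bound tuples range over distinct edges. Second, one must make sure the combinatorial prefactor is genuinely $n^{2k}/k!$ and not $n^{2k}$: this is where $\proba(N\ge k)\le \E[\binom N k]$ (or a direct "choose an unordered witness set" argument) supplies the crucial $1/k!$ that turns a useless $\exp(Ck n^{1/3})$-type bound into the Poisson tail. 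The main obstacle is therefore not any single hard estimate but getting the BK bookkeeping for $k$ simultaneous inner edges exactly right and extracting the $1/k!$; once that is in place the tail computation is routine Stirling.
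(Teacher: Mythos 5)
There is a genuine gap at the heart of your argument: you bound $\proba(e_1,\dots,e_k\text{ are all inner edges})$ by the product $\prod_i \proba(e_i\text{ is an inner edge})$ ``by BK applied repeatedly''. The van den Berg--Kesten inequality bounds the probability of the \emph{disjoint} occurrence of events by the product of their probabilities, not the probability of their intersection, and the witnesses for distinct inner edges need not be disjoint: the long saops ending at $u_1$ and at $u_2$ may share almost all of their edges (think of $e_1,e_2$ being two nearby edges on one long path). The fact that each single event $\{e_i\text{ is an inner edge}\}$ is itself a disjoint occurrence of three increasing events does not make the joint event for several $e_i$'s a disjoint occurrence of $3k$ events. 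Worse, each inner-edge event is increasing in the independent edge indicators, so by Harris/FKG the intersection probability is at \emph{least} the product --- the inequality you need goes in the wrong direction. Accordingly your intermediate claim $\proba(N\geq k)\leq (2C^2n^{1/3})^k/k!$ with $k\asymp xn^{1/3}$ is false: it would give $\proba(\L(\vec G(n,p))>xn^{1/3})\leq e^{-c\,x n^{1/3}\log x}$, which tends to $0$ as $n\to\infty$ for fixed $x$, whereas in the critical window a component containing a saop of length $xn^{1/3}$ exists with probability bounded below uniformly in $n$ (of order $e^{-Cx^3}$, consistent with the Nachmias--Peres remark quoted in the paper).

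The paper circumvents exactly this issue by only ever applying BK to tuples whose witnesses are forced to be disjoint. It introduces the event that $(u_1,v_1),\dots,(u_k,v_k)$ are \emph{disjointly} inner edges, to which BK legitimately gives the bound $p^kC^{2k}n^{-2k/3}$, and then uses a geometric observation in place of your $1/k!$: if a saop of length $L>9kn^{1/3}$ exists, any $k$-tuple of its edges pairwise separated by at least $2\lceil n^{1/3}\rceil$ steps along the path (and $\lceil n^{1/3}\rceil$ steps from its ends) is disjointly inner, because the witnessing sub-paths are disjoint pieces of that single saop, and there are at least $(L/2)^k$ such tuples. Markov's inequality applied to the number of disjointly-inner $k$-tuples then gives a bound of order $(C'/x)^k$, and the choice $k=\lfloor x/9\rfloor$ --- of order $x$, not $xn^{1/3}$ --- yields $e^{-cx\log x}$. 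To repair your proof you need this path-separation device (or some other mechanism guaranteeing disjoint witnesses); counting all inner edges and extracting a $1/k!$ cannot work.
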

\begin{remark} I don't think the above bound is optimal, as for Erd\"os--R\'enyi random graphs a bound in $e^{-c x^{3/2}}$ was proved by Nachmias and Peres \cite[Proposition 1.4]{NachmiasPeres}.
\end{remark}
\begin{proof} For $k\in \N$, we say that $(u_1,v_1),(u_2,v_2),\dots, (u_k,v_k)\in \set n$ are disjointly inner edges, if disjointly for $1\leq i \leq k$, $(u_i,v_i)$ is an inner edge. By Proposition \ref{Main}, and by the BK inequality, for some $C\in \R^+$, for every $k\in \N$, $u_1,v_1,u_2,v_2,\dots,u_k,v_k\in \set n$,  
\begin{equation} \proba((u_1,v_1),\dots, (u_k,v_k) \text{ are disjointly inner edges})\leq p^kC^{2k} n^{-2k/3}. \label{30/04/14h} \end{equation}

Then given a saop of length $L$, we may look at the $k$-tuples of edges in this path that are separated by at least $2\lceil n^{1/3}\rceil$ edges from each other and separated by at least $\lceil n^{1/3}\rceil$ edges from the start and end of the saop. By a counting argument, we see that if $L>9k n^{1/3}$ then there is at least $(L/2)^k$ such $k$-tuples of edges. Moreover, note that such tuples of edges are disjointly inner edges. 

It follows from \eqref{30/04/14h} and by Markov's inequality, that for every $k\in \N$, $L>9k n^{1/3}$,
\[ \proba(\L(\vec G(n,p))\geq L)\leq n^{2k}p^k C^{2k} n^{-2k/3}/ (L/2)^k = (2pn^{-2/3}C/L)^k.\]
The desired result follows by taking $L=\lceil xn^{1/3}\rceil $ and $k=\lfloor x/9 \rfloor$.
\end{proof}
\subsection{The depth-first search (DFS) exploration} \label{sec:DFS}  \label{2.2} 
To show Proposition \ref{Main} we need a construction from \cite{ChristinaOriented} for $\vec G(n,p)$. Since we only look at the paths going to $1$, we only need part of it. So, we aim for concision, and refer to \cite{ChristinaOriented} for details. We define the DFS exploration of a graph as follows: We define inductively for $i\in \llbracket 0,n\rrbracket$ an ordered set (or stack) $\mathcal O_i$ of open vertices at time $i$, and a set $\mathcal A_i$ of vertices already explored at time $i$.

\begin{algorithm}\label{DFS} \emph{DFS exploration of a graph on $\set n$.}
\begin{compactitem}
\item Set $\mathcal O_0=(1)$, $\mathcal A_0=\emptyset$.
\item For $0\leq i \leq n-1$: Let $v_{i+1}$ be the first vertex of $\mathcal O_i$. Let $\mathcal A_{i+1}= \mathcal A_i\cup\{v_{i+1}\}$. Let $\mathfrak N_i$ be the set of neighbours of $v_i$ that are not in $\mathcal O_i\cup \mathcal A_i$. Construct $\mathcal O_{i+1}$ by removing $v_i$ from the start of $\mathcal O_i$ and by adding in increasing order $\mathfrak N_i$ to the start of $\mathcal O_{i}\backslash \{v_i\}$. If now $\mathcal O_{i+1}=\emptyset$, add to it the lowest-labelled element of $\set n\backslash \mathcal A_i$. 
\end{compactitem}
\end{algorithm} 
Note that alongside the exploration we construct a permutation $(v_i)_{1\leq i \leq n}$ of $\set n$, which we call the DFS-order. Our main motivation for using this DFS exploration is the next construction:
\begin{algorithm} \label{algo2} Decomposition of $\vec G=\vec G(n,p)$ from Proposition 2.1 (iii)  of \cite{ChristinaOriented}:
\begin{compactitem}
\item Let $G(n,p)$ be a Erd\"os--R\'enyi random graph.
\item Let $(v_i)_{1\leq i \leq n}$ be the DFS order for $G(n,p)$.
\item For every $i<j$ put $(v_j,v_i)$ in $\vec G(n,p)$ if and only if $\{v_j,v_i\}\in G(n,p)$.
\item Independently for every $i<j$, put $(v_i,v_j)$ in $\vec G(n,p)$ with probability $p$.
\end{compactitem}
\end{algorithm}
Along this construction we split the oriented edges of $\vec G(n,p)$ in three categories (see Figure \ref{DFSfig}): 
\begin{compactitem}
\item The down-edges: Let $\downarrow(\vec G):=\{(v_i,v_j),i\in \set n, j\in \mathfrak N_i\}$ be the set of down-edges of $\vec G$. Note that $(\set n,\downarrow(\vec G))$ is a forest oriented from the leaves toward the roots. 
\item The surplus-edges:  Let  $\hookleftarrow(\vec G):=\{(v_j,v_i),i<j\}\backslash \downarrow(\vec G)$ be the set of surplus edges of $\vec G$. One can recover $G(n,p)$ by removing the orientations in $\downarrow(\vec G)\cup \hookleftarrow(\vec G)$.
\item The right-edges: Let  $\leadsto(\vec G):=\{(v_i,v_j),i<j\}$ be the set of right-edges. 
\end{compactitem}
 \begin{figure}[!h] 
\centering
\includegraphics[scale=0.60]{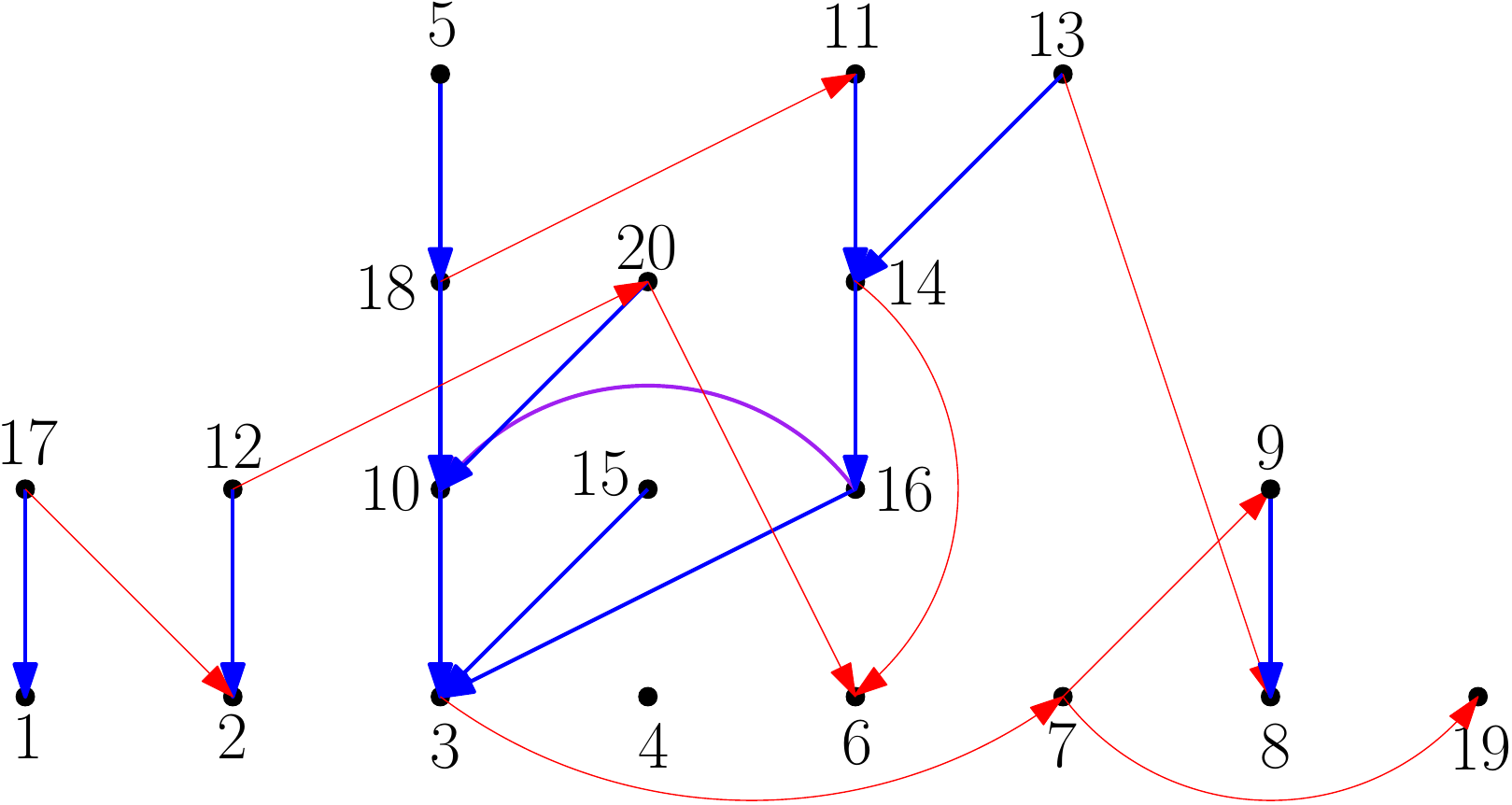}
\caption{A DFS decomposition of a random digraph $\vec G$ on $\set{20}$. The DFS order of $\vec G$ is $1\prec 17\prec 2 \prec 12\prec 3 \prec 10 \prec 18\prec 5 \prec 20\prec 15 \prec 14 \prec 11 \prec 13 \prec 16 \prec 4 \prec 6 \prec 7 \prec 8 \prec 9 \prec 19$. The down-edges $\downarrow(\vec G)$ are blue, the surplus-edges $\hookleftarrow(\vec G)$ are purple, and the right-edges $\leadsto(\vec G)$ are red. The longest saop has length $9$ and is $(5,18, 11, 14, 16, 10, 3, 7, 9, 8)$.
} 
\label{explore1} \label{DFSfig}
\end{figure}
\subsection{Analog problem for uniform rooted trees, and why it implies Proposition \ref{Main}} \label{2.3} 
Recall Algorithm \ref{algo2}. We write $\C(1)$ for the connected component of $1$ in $G(n,p)$.
Note that all saop in $\vec G$ ending at $1$ must start in $\C(1)$. So we may focus on $\C(1)$. Since with small probability $\C(1)$ contains a cycle, we may restrict to the case where $\C(1)$ is a tree. In this case, given $\# \C(1)$, it is well known that $\C(1)$ is a uniform rooted tree. 
For those reasons to show Proposition \ref{Main} it suffices to prove the following analog problem (see the end of the section for more details):

For $m\in \N$, $p\in [0,1]$, we construct a random directed graph $\vec \T_{m,p}$ as follows: Let $\T_m$ be a uniform tree with vertices  $\set m$. Root $\T_m$ at $1$. Let $\vec \T_m$ be $\T_m$ with the edges oriented toward $1$. Let $\leadsto(\vec \T_{m,p})$ be an independent copy of $\vec G(m,p)$. Let $\vec \T_{m,p}$ be the union of $\vec \T_m$ and $\leadsto(\vec \T_{m,p})$.

We write $\L(\vec \T_{m,p})$ for the length of the longest saop in $\vec \T_{m,p}$. 
We need the following result:
\begin{proposition} \label{Main2} There exists some constants $C>0$ such that for every $m\in \N$, $p>0$, $x>0$, 
\[ \proba(\L(\vec \T_{m,p})>x \sqrt{m})\leq C(1/x^2+p^{2/3}m).
\]
\end{proposition}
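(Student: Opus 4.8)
The plan is to split a longest self-avoiding oriented path (saop) of $\vec\T_{m,p}$ into its maximal \emph{tree segments} — maximal stretches following edges of $\vec\T_m$, hence monotone toward the root — separated by \emph{jump edges} of $\leadsto(\vec\T_{m,p})$. A saop with $j$ jumps has $j+1$ segments $S_0,\dots,S_j$ and length $\sum_{i=0}^j |S_i|+j$. If $j=0$ it lies entirely in $\vec\T_m$, so $\proba(\L(\vec\T_{m,p})\ge x\sqrt m,\ j=0)\le \proba(\height(\T_m)\ge x\sqrt m)\le \E[\height(\T_m)^2]/(x^2m)\le C/x^2$, using the standard bound $\E[\height(\T_m)^2]=O(m)$ for uniform trees. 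This is the $1/x^2$ term.

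For $j\ge 1$, set $d_i:=|S_i|$, so $\L\ge x\sqrt m$ forces $\sum_{i=0}^j d_i+j\ge x\sqrt m$. Cut the saop after each jump into $j+1$ pieces: the initial climb $S_0$, and for $1\le i\le j$ the jump $e_i$ followed by the climb $S_i$. Record the top vertex $z_i$ of $S_i$, so that $z_{i-1}$ is the tail of $e_i$ and $z_i$ is the ancestor at distance $d_i$ of the head of $e_i$. Since a saop is an edge-self-avoiding trail the $e_i$ are distinct, hence the existence of the $j$ pieces is the disjoint occurrence of the increasing events $\{\exists w:\ z_i \text{ is the ancestor of } w \text{ at distance } d_i,\ (z_{i-1},w)\in\leadsto(\vec\T_{m,p})\}$; by the BK inequality this has probability at most $\prod_{i=1}^j p\,|D_{d_i}(z_i)|$, where $D_d(z)$ denotes the descendants of $z$ at distance $d$. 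Summing over the free data $z_0,\dots,z_j,d_0,\dots,d_j$ (the choice of $S_0$ contributing $\sum_{z_0}|D_{d_0}(z_0)|$) and using $\sum_z|D_d(z)|=Z_{\ge d}$, the number of vertices of $\T_m$ at depth $\ge d$ (with $Z_{\ge 0}:=m$), we obtain
\[
\proba\bigl(\L(\vec\T_{m,p})\ge x\sqrt m\bigr)\ \le\ \frac{C}{x^2}\ +\ \sum_{j\ge 1}\ \sum_{\substack{d_0,\dots,d_j\ge 0\\ d_0+\cdots+d_j+j\ge x\sqrt m}} p^{\,j}\,\E\Bigl[\ \prod_{i=0}^j Z_{\ge d_i}\ \Bigr].
\]
It thus remains to bound the double sum by $C\,p^{2/3}m$ (plus a further $O(1/x^2)$ coming from configurations with a single dominant long climb, which again means $\height(\T_m)\gtrsim x\sqrt m$).

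The naive estimate $\E[\prod_i Z_{\ge d_i}]\approx\prod_i\E[Z_{\ge d_i}]\approx\prod_i m\,e^{-d_i^2/2m}$, turning the sum over profiles into a Gaussian integral and optimizing over $j$ and $(d_i)$, only gives a bound of order $p^{-1}\exp\!\bigl(-c\,x\sqrt{\log(1/(pm^{3/2}))}\bigr)$, which is $\le p^{2/3}m$ only once $x\gtrsim\sqrt{\log m}$ — essentially the insufficient $\log n\cdot n^{1/3}$ bound mentioned in the introduction — and it is lossy because it treats the events ``$Z_{\ge d_i}$ large'' for different large $d_i$ as independent. The genuine input is \emph{branch tightness}: decomposing $\T_m$ into its successive longest branches, the rescaled branch lengths are tight, so for $d$ a large multiple of $\sqrt m$ essentially all vertices at depth $\ge d$ lie along $O_\proba(1)$ long branches. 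Hence $Z_{\ge d}$ has a much lighter tail than $\E[Z_{\ge d}]$ suggests, and — what is actually needed — $\E[\prod_i Z_{\ge d_i}]$ is far below $\prod_i\E[Z_{\ge d_i}]$ as soon as more than $O(1)$ of the $d_i$ are large, since any one saop can contain only $O_\proba(1)$ ``long'' tree segments. Feeding this into a dyadic (chaining) organization of the double sum — grouping the $d_i$ by scale and the configurations by number of jumps — the sum is governed by configurations with a bounded number of long climbs and roughly $(pm^{3/2})^{2/3}$ ``useful'' jumps, from which $C\,p^{2/3}m$ emerges; the parameter ranges where $p^{2/3}m\ge 1$ are trivial.

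The main obstacle is precisely this last estimate together with the chaining that converts it into the final bound: one must quantify, uniformly in $m$, how the deep part of a uniform tree concentrates on few branches, transfer this to the product of vertex counts in the union bound, and perform the multiscale summation so that no spurious logarithmic factor survives — this is the content of Section~\ref{Sec:Chaining}. The earlier steps (the segment/jump decomposition, the BK step, legitimate because a saop is an edge-self-avoiding trail, and the reduction to the displayed double sum) are routine; all the delicate work is in controlling the tree.
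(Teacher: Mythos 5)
Your proposal has a genuine gap at its core. The easy part is fine: the $1/x^2$ term via $\proba(\height(\T_m)\geq x\sqrt m)$, and the segment/jump decomposition with a (conditional on $\T_m$) union bound over distinct right-edges leading to the displayed sum $\sum_j p^j\,\E[\prod_i Z_{\geq d_i}]$, are all legitimate. But the estimate that would actually produce the $p^{2/3}m$ term is never established. You yourself note that the factorized bound $\prod_i\E[Z_{\geq d_i}]$ is too lossy, and then invoke ``branch tightness'' to claim that $\E[\prod_i Z_{\geq d_i}]$ is much smaller when several $d_i$ are large, followed by a ``dyadic organization'' of the sum. Neither step is carried out: no quantitative bound on the joint law of the depth profile (a moment bound uniform in $m$, not mere tightness of rescaled branch lengths) is stated or proved, the assertion that ``any one saop can contain only $O_\proba(1)$ long tree segments'' is essentially a restatement of what you are trying to prove, and the multiscale summation that is supposed to avoid the logarithmic loss is only described in intent. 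Deferring this to ``the content of Section~\ref{Sec:Chaining}'' is circular, since that section is precisely the proof of the proposition.

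For comparison, the paper's route never controls products of depth counts along one path. It conditions on $\T_m$ and uses three ingredients: (i) a covering-number statement (Lemma \ref{prelim}): with probability $1-O(\delta^4)$ one can delete at most $\delta^{-100}$ ancestral lines so that the remaining forest has height $\leq\delta\sqrt m$; (ii) the deterministic chaining inequality (Lemma \ref{Principle2}) relating $\L(j)$ to the longest path inside the pruned forest plus $\max_{i\in S}\L(i)$; and (iii) the conditional counting bound (Lemma \ref{Single}): inside a set of height $h$, a path from a fixed vertex using $k$ right-edges has probability at most $(phm)^k$. Choosing $l=(p^{2/3}m)^{-1/4}$, nested sets $S_k$ with heights $l\sqrt m/2^k$ and increments of size $(2^k l)^{100}$, and taking the power $k=101$ in (iii) to beat the polynomial covering number, the geometric series of heights gives $O(\sqrt m\,l)\leq x\sqrt m$ with the error probabilities summing to $O(p^{2/3}m)$. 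If you want to salvage your first-moment approach you would have to prove a genuinely new correlation estimate on $(Z_{\geq d})_{d}$ strong enough to kill the log factor; the paper's covering/chaining argument is how that difficulty is actually bypassed.
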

\begin{remark} The method of the proof can yield a much better bound, but I do not think it is needed.
\end{remark}
\begin{proof}[Proof of Proposition \ref{Main} given Proposition \ref{Main2}.] By an union bound we get:
\begin{equation} \proba(\L^{\uparrow}(1)>\delta n^{1/3})\leq \proba(\C(1)\text{ is not a tree})+\sum_{m=1}^\infty \proba \left (\L^{\uparrow}(1)>\delta n^{1/3}, \C(1) \text{ is a tree}, \#\C(1)=m\right )  . \label{7/5/22h} \end{equation}
By classical results on Erd\"os-R\'enyi random graphs, in the critical window, $\C(1)$  is not a tree with probability $O(n^{-1/3})$ (see e.g. Aldous \cite{Aldous_exc_ER}). 
 And by Proposition \ref{Main2} for some universal constant $C$, for every $m\geq 1$,
\begin{align*} \proba \left (\L^{\uparrow}(1)>\delta n^{1/3}, \C(1) \text{ is a tree}, \#\C(1)=m\right ) & \leq \proba( \C(1) \text{ is a tree}, \#\C(1)=m)\proba(\L(\vec \T_{m,p})>\delta n^{1/3})
\\ & \leq \proba(\#\C(1)=m) Cm(1/(\delta^2 n^{2/3})+p^{2/3}). \end{align*}
It follows by sum that, writing $S$ for the sum in \eqref{7/5/22h},
\[ S\leq C (1/(\delta^2 n^{2/3})+p^{2/3}) \E[\#\C(1)]. \]
Moreover, it is well known that in the critical window $ \E[\#\C(1)]=O(n^{1/3})$ (see e.g. Aldous \cite{Aldous_exc_ER}). 
 Thus $S=O(n^{-1/3})$, and Proposition \ref{Main} follows from \eqref{7/5/22h}.
\end{proof}
\section{Proof of Proposition \ref{Main2}} \label{Sec:Chaining} 
In this section $m,p$ are fixed, and we often omit in the notations the dependence in $m,p$.

To prove Proposition \ref{Main2} we use the chaining method based on leaf tightness. This idea is not new and comes from the pioneer papers of Aldous  \cite{Aldous1,Aldous3} on uniform rooted trees. It has been refined over the last recent years in e.g. \cite{Amini, CurienHaas, Seni,BRphd}, and I refer to the introduction of my thesis \cite{BRphd} for a detailed discussion on the subject.

The main idea is that we may approximate all paths from $\vec \T_{m,p}$ by looking only at the paths starting from a finite number of vertices. To go in the  details, we need to introduce a few notations. We consider for $i\in \set m$, $\L(i)$ the length of the longest saop starting from $i$ in $\vec \T_{m,p}$. Similarly for $i\in \set m,S\subset \set m$, let $\L(i,S)$ be the length of the longest saop starting from $i$ in $\vec \T_{m,p}\cap S$. Let $\height(S)-1$ be the length of the longest saop in $\vec \T_m\cap S$.  Also let $\downarrow(i)$ be the set of ancestors of $i$ in $\T_m$. Our chaining argument is based on the next result:

\begin{lemma} \label{Principle2}
For all $S\subset \set m$, $j\in \set m$, 
\[ \L(j) \leq \L\left (j, \set m \backslash \bigcup_{i\in S}\downarrow(i) \right )+1+\max_{i\in S} \L(i). \] 
\end{lemma}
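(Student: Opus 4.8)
The plan is to take a longest saop $P = (j = w_0, w_1, \dots, w_k)$ starting from $j$ in $\vec \T_{m,p}$, so $k = \L(j)$, and split it at the first time it enters the "forbidden" region $F := \bigcup_{i \in S} \downarrow(i)$. Concretely, let $t$ be the smallest index such that $w_t \in F$ (if no such index exists, then $P$ lives entirely in $\set m \setminus F$ and $\L(j) = k \le \L(j, \set m \setminus F)$, which is even stronger than the claimed bound, so we are done). The initial segment $(w_0, \dots, w_{t-1})$ is a saop of length $t-1$ using only vertices outside $F$, hence $t - 1 \le \L(j, \set m \setminus F)$.

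Next I would control the tail $(w_t, w_{t+1}, \dots, w_k)$. Since $w_t \in F$, there exists some $i \in S$ with $w_t \in \downarrow(i)$, i.e. $w_t$ is an ancestor of $i$ in $\T_m$ (or $w_t = i$); here one should read $\downarrow(i)$ as including $i$ itself, or else note $i \in \downarrow(i)$ is harmless. The key point is that because $\vec \T_m$ is oriented from leaves toward the root $1$, the down-path in $\vec \T_m$ from $i$ up to $w_t$ is an oriented path in $\vec \T_{m,p}$; prepending it to the tail $(w_t, \dots, w_k)$ would give an oriented walk from $i$ of length $(\text{something}) + (k - t)$. The length of the down-path from $i$ to $w_t$ is at least $0$, so at the very least $(w_t, \dots, w_k)$ has length $k - t \le \L(i) \le \max_{i \in S} \L(i)$ — but I need to be slightly careful that concatenating a down-path with the tail $P$ produces a genuine \emph{self-avoiding} oriented path (distinct edges), not merely a walk, in order to invoke the definition of $\L(i)$. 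The cleanest route is: either the tail alone already realizes a saop of length $k-t$ starting at $w_t$, and since $w_t$ lies on a down-path from $i$ one can splice without repeating edges (the down-edges of $\vec\T_m$ between $i$ and $w_t$ are disjoint from everything used further along), giving $k - t \le \L(i)$; or one argues the minimal necessary inequality directly. In either case $k - t \le \max_{i \in S} \L(i)$.

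Combining the two pieces: $\L(j) = k = (t-1) + 1 + (k-t) \le \L(j, \set m \setminus F) + 1 + \max_{i \in S} \L(i)$, which is exactly the claimed inequality. The main obstacle — and the only subtlety — is the edge-disjointness bookkeeping in the splicing step: one must check that attaching the tree down-path from $i$ to $w_t$ in front of the tail of $P$ does not reuse an edge, so that the resulting object is a legitimate saop certifying the bound via $\L(i)$ rather than via a weaker "walk length" notion. Everything else (the first-entrance decomposition, the self-avoiding prefix bound) is immediate from the definitions.
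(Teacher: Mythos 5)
Your plan is the same decomposition the paper itself uses: cut the longest saop from $j$ at its first entrance into $\bigcup_{i\in S}\downarrow(i)$, bound the prefix by $\L\left(j,\set m\setminus\bigcup_{i\in S}\downarrow(i)\right)$, pay one edge to enter, and bound the tail by $\max_{i\in S}\L(i)$ by prepending the tree path from $i$ to the entry vertex $w_t$. The paper's own proof is exactly this, stated just as briefly (``after it meets $\downarrow(i)$, it coincides with a part of a path starting from $i$'').

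The problem is the parenthetical with which you dismiss what you yourself call the only subtlety: the claim that ``the down-edges of $\vec\T_m$ between $i$ and $w_t$ are disjoint from everything used further along'' is not justified, and it is not true in general. A saop is only required to have pairwise distinct \emph{edges}, not distinct vertices, and $t$ is only the \emph{first} time the path meets $\bigcup_{i\in S}\downarrow(i)$; nothing prevents the tail $(w_t,\dots,w_k)$ from later jumping, via an edge of $\leadsto(\vec\T_{m,p})$, to a vertex strictly between $i$ and $w_t$ on the tree branch and then traversing one of the very down-edges you want to prepend. In such a configuration the spliced walk repeats an edge, so it is not a saop and cannot certify $k-t\le\L(i)$; and your fallback (``or one argues the minimal necessary inequality directly'') is left completely unspecified. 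So the step you flag is a genuine gap rather than bookkeeping: as written, the tail bound is not established, and repairing it requires an actual argument (for instance a smarter choice of the splicing time or of the vertex of $S$ used, or a sharper formulation of what must be proved), not the asserted disjointness. To be fair, the paper's two-line proof glosses over precisely the same point, so you have reproduced its approach, but you have not supplied the missing verification either.
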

\begin{proof}
Note that a saop starting from $j$ must meet $ \bigcup_{i\in S}\downarrow(i)$ in at most $\L (S\backslash \bigcup_{i\in S}\downarrow(i) )+1$ steps, and after it meet say $\downarrow(i)$, it coincides with a part of a path starting from $i$ and so must last at more $\L(i)$ steps. 
\end{proof}
Morally the previous lemma tells us that for $S',S\subset \set m$ to estimate $\max_{i\in S'} \L(i)$ given $\max_{i\in S} \L(i)$ it suffices to estimate the length of the paths in a much smaller subgraph of $\vec \T_{m,p}$.  Those paths then tends to be much smaller. This allows us to inductively estimate for well chosen sets $S_1\subset S_2\subset \dots \subset S_K=\set m$, the $\max_{i\in S} \L(i)$. To do so, we need an upper-bound on $\L(i,S)$ for $i\in \set m, S\subset \set m$. This is provided by the next lemma:
\begin{lemma} \label{Single} For every $\T_m$-measurable random variables $S\subset \set m, i\in \set m, k\in \N$,
\[ \proba(\L(i,S)>(k+1)\height(S)|\T_m)\leq (p \height(S)m)^k.\]
\end{lemma}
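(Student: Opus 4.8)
The plan is to estimate $\L(i,S)$ by separately controlling the contribution of down-edges (which can only move toward the root $1$ inside $\vec\T_m$) and the contribution of right-edges (the independent copy of $\vec G(m,p)$). The key observation is the structural dichotomy for a saop in $\vec\T_{m,p}\cap S$: each edge is either a down-edge of $\vec\T_m$ or a right-edge of $\leadsto(\vec\T_{m,p})$. Between two consecutive right-edges, the path follows down-edges, hence strictly decreases the tree-depth (moves toward $1$); in particular any maximal run of consecutive down-edges has length at most $\height(S)-1$, since such a run is itself a saop in $\vec\T_m\cap S$. So if a saop uses $j$ right-edges in total, its length is at most $(j+1)(\height(S)-1) + j \le (j+1)\height(S)$. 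Therefore the event $\{\L(i,S) > (k+1)\height(S)\}$ forces the existence of a saop starting at $i$, staying in $S$, and using at least $k+1$ right-edges.

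Next I would bound the probability of that event by a union bound over the possible sequences of right-edges. Work conditionally on $\T_m$ (hence on $S$ and on $\height(S)$). A saop using right-edges $e_1,\dots,e_{k+1}$ in order is determined, in the down-edge segments, by $\vec\T_m$ alone; what is random and independent is whether each $e_\ell$ actually belongs to $\leadsto(\vec\T_{m,p})$. Crucially, the right-edges appearing along a \emph{single} saop are all distinct (a saop has distinct edges), so the events ``$e_\ell \in \leadsto(\vec\T_{m,p})$'' for $\ell=1,\dots,k+1$ are independent, each of probability $p$. Hence for a fixed admissible choice of right-edges the probability is $p^{k+1}$ — or one can drop the last edge and work with the first $k$ right-edges to get $p^k$, which is what the statement uses. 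Then I count the number of ways to choose the first $k$ right-edges used: after each of the $k$ right-edges the path sits at some vertex of $S$, from which the subsequent down-edge run is forced to head toward the root, so at each of the $k$ steps we choose a target vertex in $S$ for the head of the right-edge, giving at most $m$ choices; combined with at most $\height(S)$ choices for \emph{where along} the preceding down-run the right-edge is emitted, this yields at most $(\height(S)\,m)^k$ relevant prefixes. Multiplying by $p^k$ gives the bound $(p\,\height(S)\,m)^k$.

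The step I expect to be the main obstacle is making the counting argument for the prefixes fully rigorous: one must be careful that a ``prefix'' consisting of $k$ right-edges together with the intervening down-runs is genuinely specified by $k$ choices of a vertex of $S$ (the endpoint reached) and $k$ choices of a depth at which the right-edge leaves the down-run, and that the down-runs themselves are then determined by $\vec\T_m$ because they are forced to descend toward $1$ and stay in $S$. One also has to check that distinct such prefixes give disjoint ``the path realizes this prefix'' requirements only up to the independence of the right-edge indicators — which is fine because, as noted, the right-edges along any one saop are distinct and hence their presence indicators are jointly independent with marginal $p$. Once this bookkeeping is in place, the union bound over the at most $(\height(S)m)^k$ prefixes, each contributing probability at most $p^k$, closes the argument and gives $\proba(\L(i,S) > (k+1)\height(S)\mid \T_m) \le (p\,\height(S)\,m)^k$.
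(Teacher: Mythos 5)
Your proposal is correct and follows essentially the same route as the paper: reduce the event $\{\L(i,S)>(k+1)\height(S)\}$ to the existence of a saop from $i$ in $S$ using at least $k$ right-edges (since maximal down-runs have length at most $\height(S)-1$), then union-bound over the admissible tuples of right-edges, with at most $\height(S)$ choices for each tail and $m$ for each head and probability $p^k$ for each fixed tuple by independence of the distinct right-edge indicators. The bookkeeping you flag as the main obstacle is exactly the counting the paper performs, so there is no gap.
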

\begin{proof} Recall that a saop in $S$ which only use oriented edges from $\vec \T_m$ has length at most $\height(S)-1$. Thus it is enough to upper-bound the probability that there exists a path in $S$ starting from $i$ which uses at least $k$ edges from $\leadsto(\vec \T_{m,p})$. By an union bound this probability is bounded by
\[  \sum \proba(\forall i\in \llbracket 0, k-1\rrbracket, (j_a,i_{a+1})\in \leadsto(\vec T_{m,p})|\T_m),\] 
where the sum above is over all tuples $i_0,j_0,i_1,\dots i_k$ such that $(j_0,i_0)\neq (j_1,i_1)\neq \dots (j_k,i_k)$, and such that for every $a\in \llbracket 0,k-1\rrbracket$, there is a path from $i_{a}$ to $j_a$ in $\vec T_m \cap S$. Note that there is at most $\height(S)$ choices for $j_0$, then at most $m$ choices for $i_1$, then at most $\height(S)$ choices for $j_1$, and so on\dots And by definition of $ \leadsto(\vec T_{m,p})$, the above probability is $p^k$. The desired result follows.
\end{proof}
To finish the proof of Proposition \ref{Main2}, we need the next estimates. For $\delta>0$, let $N(\T_m,\delta)$ be the size of the minimal set $S\subset \set m$ such that $\height(\set m\backslash \bigcup_{i\in S} \downarrow (i))\leq x$. This covering number morally represents how many branches one need to delete in $\T_m$ to have a forest of height at most $x$.
\begin{lemma} \label{prelim} There exists a constant $C$ such that for every $m\in \N$, $\delta >0$,
\[ \proba \left ( N(\T_m, \delta \sqrt{m})\geq 1/\delta^{100}  \right )\leq C\delta^4,\]
and such that for every $m\in \N$, $x>0$,
\[ \proba \left ( \height(\set m)\geq x\sqrt{ m} \right )\leq C/x^4.\]
\end{lemma}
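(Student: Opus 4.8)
Looking at this, I need to prove Lemma \ref{prelim}, which has two parts: a bound on the covering number $N(\T_m, \delta\sqrt{m})$ and a bound on the height $\height(\set m)$ of a uniform tree on $m$ vertices. Both should follow from classical facts about uniform random trees (CRT scaling limits, or more direct moment computations).

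Let me think through the approach.

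The two displayed inequalities are essentially independent, and the one on $\height(\set m)$ is the softer of the two, so I would dispatch it first. Since $\height(\set m)-1$ is the length of the longest chain of ancestors in $\T_m$, $\height(\set m)$ is (one plus) the height of $\T_m$; a uniform tree on $m$ vertices is a $\mathrm{Poisson}(1)$ Galton--Watson tree conditioned to have $m$ vertices, so by the classical sub-Gaussian tail bound for heights of conditioned Galton--Watson trees (Flajolet--Odlyzko; Addario-Berry--Devroye--Janson for the sub-Gaussian form) there are universal $C,c$ with $\proba(\height(\set m)\geq h)\leq Ce^{-ch^{2}/m}$. Taking $h=x\sqrt m$ gives $\proba(\height(\set m)\geq x\sqrt m)\leq Ce^{-cx^{2}}\leq C'/x^{4}$; alternatively one may invoke $\sup_m\E[(\height(\set m)/\sqrt m)^{5}]<\infty$ and Markov.

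For the covering number, I would first reduce it to an antichain count. Write $T_v\subseteq\set m$ for $v$ together with its descendants in $\T_m$, so $\height(T_v)$ is the number of vertices on the longest ancestral chain in $T_v$; note $\height(T_v)=1+\max_{c}\height(T_c)$ over the children $c$ of $v$, and more generally $\height(T_{v'})\leq\height(T_v)-d_{\T_m}(v,v')$ whenever $v'$ is a descendant of $v$ (prepend the geodesic $v\to v'$ to the longest chain of $T_{v'}$). For any $S$, the set $\bigcup_{i\in S}\downarrow(i)$ is the smallest subtree of $\T_m$ containing $S\cup\{1\}$, and removing it from $\set m$ leaves the forest whose components are exactly the $T_c$ for vertices $c$ not in it whose parent is in it. Put $x:=\delta\sqrt m$ and $R:=\{v\in\set m:\height(T_v)>x\}$. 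By the displayed identity $R$ is ancestor-closed, hence a subtree through $1$ (or empty), and taking $S$ to be the set of leaves of $R$ gives $\bigcup_{i\in S}\downarrow(i)=R$ and $\height(\set m\setminus R)=\max_{c\notin R}\height(T_c)\leq x$, so this $S$ is admissible and
\[N(\T_m,\delta\sqrt m)\ \leq\ \#\{\text{leaves of }R\}.\]
A leaf $v$ of $R$ has $\height(T_v)>x$ while all its children have $\height(T_c)\leq x$, so $\height(T_v)=1+\max_c\height(T_c)\in(x,x+1]$; and by the displayed monotonicity two vertices whose $\height(T_\cdot)$ both lie in an interval of length $1$ cannot be ancestor-related. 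Hence the leaves of $R$ form an antichain with pairwise disjoint subtrees, so at most one of them has $|T_v|>m/2$, and therefore
\[N(\T_m,\delta\sqrt m)\ \leq\ 1+M,\qquad M:=\#\bigl\{v\in\set m:\ \height(T_v)\in(x,x+1],\ |T_v|\leq m/2\bigr\}.\]

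Next I would estimate $\E[M]$ by a first moment. Using exchangeability of the labels and the classical fact that, conditionally on $|T_v|=k$, the tree $T_v$ is a uniform tree on $k$ vertices, and letting $h_0$ be the unique integer in $(x,x+1]$,
\[\E[M]\ \leq\ m\sum_{k\leq m/2}\proba(|T_v|=k)\,q_k,\qquad q_k:=\proba\bigl(\text{height of a uniform tree on }k\text{ vertices}=h_0\bigr).\]
I would then insert two standard inputs for uniform trees: the fringe-subtree-size bound $\proba(|T_v|=k)\leq C k^{-3/2}$ for $k\leq m/2$, and a uniform local bound for the height of the form $q_k\leq k^{-1/2}\,\phi(h_0/\sqrt k)$ with $\phi$ bounded and $\int_1^{\infty}u\,\phi(u)\,du<\infty$ (this is what the classical analysis of heights of uniform random trees gives, in fact with sub-Gaussian $\phi$). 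Then $\sum_{k\leq m/2}\proba(|T_v|=k)q_k\leq C\sum_k k^{-2}\phi(h_0/\sqrt k)$; the part $k\geq h_0^{2}$ is $\leq \|\phi\|_\infty\sum_{k\geq h_0^{2}}k^{-2}\leq C/h_0^{2}$, and in the part $k\leq h_0^{2}$ the substitution $u=h_0/\sqrt k\ (\geq1)$ turns the sum into $2h_0^{-2}\int_1^{\infty}u\,\phi(u)\,du\leq C/h_0^{2}$. Since $h_0>x=\delta\sqrt m$, this gives $\E[N(\T_m,\delta\sqrt m)]\leq 1+\E[M]\leq 1+Cm/h_0^{2}\leq 1+C/\delta^{2}$. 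For $\delta\leq1$, Markov's inequality yields $\proba(N(\T_m,\delta\sqrt m)\geq\delta^{-100})\leq\delta^{100}(1+C/\delta^{2})\leq C'\delta^{98}\leq C'\delta^{4}$, and for $\delta>1$ the bound is trivial after enlarging $C'$.

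The combinatorial reduction is routine once one observes (a) that any admissible family may be replaced by the leaves of $R=\{\height(T_\cdot)>x\}$, which form an antichain, and (b) that the contribution of vertices whose fringe subtree is a macroscopic fraction of $\T_m$ must be treated via the antichain bound ``at most one such $v$'' rather than by a union bound over all vertices (which diverges in $m$ at fixed $\delta$). The genuine technical point is the local height estimate: the cruder combination ``$q_k\leq C/\sqrt k$ together with the sub-Gaussian tail $q_k\leq Ce^{-ch_0^{2}/k}$'' is \emph{not} sufficient, since it produces a spurious $\log m$ factor in $\E[M]$; one really needs the extra polynomial gain in the regime where $h_0$ is only moderately larger than $\sqrt k$, i.e.\ a bound of the form $q_k\leq k^{-1/2}\phi(h_0/\sqrt k)$ with $\phi$ decaying faster than $u^{-2}$, which is available from the classical (R\'enyi--Szekeres / Flajolet--Odlyzko, or local-limit-theorem) analysis of the height of a uniform random tree.
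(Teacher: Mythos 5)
Your proof is correct in its overall structure, but it is a genuinely different route from the paper's: the paper does not carry out an argument at all, it invokes the second bound as classical and refers for the first one to Aldous's sampling/branch-tightness method (take $K$ random vertices, show the forest left after deleting their ancestral lines has height $\le \delta\sqrt m$ with quantifiable failure probability), as redone quantitatively in the cited \cite{Uniform}. You instead extract a deterministic near-optimal covering set — the leaves of $R=\{v:\height(T_v)>\delta\sqrt m\}$, which form an antichain of disjoint fringe subtrees of height exactly $h_0$ — and bound its size by a first moment, using the fringe-subtree facts ($\proba(\#T_v=k)\le Ck^{-3/2}$ for $k\le m/2$, conditional uniformity) together with a local estimate $\proba(\height=h_0)\le k^{-1/2}\phi(h_0/\sqrt k)$ with $\int u\,\phi(u)\,du<\infty$. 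The reduction $N(\T_m,\delta\sqrt m)\le 1+\#\{v:\height(T_v)=h_0,\ \#T_v\le m/2\}$ is clean and correct (modulo the harmless ambiguity of whether $\downarrow(i)$ contains $i$), and your computation then gives $\E[N]\lesssim 1+\delta^{-2}$, hence a bound $C\delta^{98}$, far stronger than the stated $C\delta^4$; this is arguably more elementary and more quantitative than reproducing Aldous's argument. The one point to treat with care is the local height estimate: you correctly identify that the sub-Gaussian tail plus $q_k\le Ck^{-1/2}$ is insufficient (it leaves a $\log$ factor growing with $m$), and the bound you need, with $\phi$ decaying faster than $u^{-2}$, is genuinely a local-limit-type input that does not follow from the tail bound alone; it is true and classical (R\'enyi--Szekeres asymptotics, and the uniform local limit theorems of Flajolet--Gao--Odlyzko--Richmond for simply generated trees), but it must be cited precisely, since it is the load-bearing ingredient of your approach, whereas the paper's intended route via Aldous only needs tail-type estimates.
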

\begin{proof} The second part is well known. The first may be proved following readily Aldous \cite{Aldous1}. (Aldous only prove that some events hold with high probability but the same method still applies.) See also  \cite{Uniform} where similar bounds are proved in a wider context. We omit the details. 
\end{proof}
\begin{proof}[Proof of Proposition \ref{Main2}] Fix $m,p,x>0$. Without loss of generality up to chose a larger  $C$ in Proposition \ref{Main2}, we may assume $x,m$ large enough and $p^{2/3}m \geq 1/x^2\geq 1$. 
Let $l=(p^{2/3}m)^{-1/4}$. Note that $l\geq 1$. We first consider the following event:
\begin{equation}  \height(\set m)\leq l\sqrt{m}  \quad ; \quad \forall i\geq 1, \, N(\T_m,l \sqrt{m}/(2^i)\leq (2^il)^{100}. \tag{E} \label{proof:E}\end{equation}
By Lemma \ref{prelim}, we have for some universal constant $C$,
\begin{equation} \proba(\text{E})\geq 1-Cl^{-4}-C\sum_{i\geq 1} 2^{-4i} l^{-4}\geq 1-2Cl^{-4}=1-2Cp^{2/3}m. \label{8/5/14h}\end{equation}

Now we work given $\T_m$ under \eqref{proof:E}. For $S\subset \set m$, let $\downarrow (S):=\bigcup_{i\in S} \downarrow (i)$. By \eqref{proof:E}, and since $l\geq 1$, we may consider some increasing sets $\emptyset =S_0\subset S_1\subset S_2 \subset \dots$ such that for every $i\geq 1$, 
\[ \height(\set m\backslash \downarrow (S)) \leq l\sqrt{m}/2^i \quad \text{and} \quad \#(S_{i+1}\backslash S_i)\leq (2^il)^{100}.\]
 Also since $l\geq 1$, we may assume without loss of generality that for $k\geq \log(m)$, $S_k=\set m$. By Lemma \ref{Principle2}, for every $k\geq 0$,
\[ \max_{i\in S_{k+1}} \L(i)\leq 1+\max_{i\in S_{k+1}\backslash S_k} \L\left (i, \set m \backslash \downarrow(S_k) \right ) +\max_{i\in S_{k}} \L(i) .\]
Summing the last inequality over all $1\leq k \leq \log(m)$ we get
\begin{equation} \L(\vec \T_{m,p}) \leq  \log m+\sum_{k=0}^{\log m} \max_{i\in S_{k+1}\backslash S_k} \L\left (i, \set m \backslash \downarrow(S_k) \right ). \label{22/04/15h}\end{equation}

Moreover, by an union bound in Lemma \ref{Single}, for every $k\geq 0$, writing $S'_k:=\set m \backslash \downarrow(S_k)$,
\begin{align*} \proba(\exists i\in S_{k+1}\backslash S_k, \, \, \L(i,S'_k)> 102\height(S'_k) |\T_m) & \leq \#(S_{k+1}\backslash S_k)(p \height(S'_k) m)^{101}
\\ & \leq (2^kl)^{100} (pm^{3/2} l/2^k)^{101}. 
\\ & \leq 2^{-k} p^{2/3}m,
\end{align*}
using $l=(p^{2/3}m)^{-1/4}$ and $p^{2/3}m\leq 1$ for the last inequality.
By an union bound over all $k\geq 1$ we get that, given $\T_m$, with probability at least $1-2p^{2/3}m$, for every 
$k\geq 0, i\in S_{k+1}\backslash S_k$, we have $\L(i,S'_k)\leq 3 \height(S'_k)$. Hence, by \eqref{22/04/15h}, given $\T_m$ under \eqref{proof:E}, with probability at least $1-2p^{2/3}m$,
\[ \L(\vec \T_{m,p})   \leq  \log m+\sum_{k=0}^{\log m} \max_{i\in S_{k+1}\backslash S_k} \L\left (i, S'_k \right )
 \leq \log m+\sum_{k=0}^\infty \frac{102  \sqrt{m} l}{2^i}
 \leq \log m+102\sqrt{m} l \leq x\sqrt{m},\]
 where the last inequality holds when $p^{2/3} m\geq 1/x^2$ and $x,m$ are large enough. The desired result follows from \eqref{8/5/14h} and the above inequality.
\end{proof}
\bibliographystyle{plain}

\end{document}